\title[On log canonical rings]
{On the log canonical ring with Kodaira dimension two}
\author{Haidong Liu}
\date{2020/9/19, version 0.10}
\subjclass[2010]{Primary 14E30; Secondary 14N30}
\keywords{log canonical ring, canonical bundle formula}
\address{Peking University, Beijing International Center for Mathematical Research, 
Beijing, 100871, China}
\email{hdliu@bicmr.pku.edu.cn}
\DeclareMathOperator{\Gr}{Gr}
\DeclareMathOperator{\Supp}{Supp}
\DeclareMathOperator{\Id}{Id}
\newtheorem{thm}{Theorem}[section]
\newtheorem{lem}[thm]{Lemma}
\newtheorem{cor}[thm]{Corollary}
\theoremstyle{definition}
\newtheorem{rem}[thm]{Remark}
\newtheorem*{ack}{Acknowledgments}
\begin{document}

\begin{abstract}
We prove that the log canonical ring of a projective 
log canonical pair with Kodaira dimension two 
is finitely generated. 
\end{abstract}  

\maketitle 

%\tableofcontents

\section{Introduction}\label{sec1}

In this paper, we prove the following main result:

\begin{thm}[Main Theorem]\label{main-thm}
Let $(X, \Delta)$ be a projective log canonical pair such 
that $\Delta$ is a $\mathbb Q$-divisor. 
Assume that $\kappa (X, K_X+\Delta)=2$. 
Then the log canonical ring 
$$
R(X, \Delta)=\bigoplus _{m\geq 0}H^0(X, \mathcal 
O_X(\lfloor m(K_X+\Delta)\rfloor))
$$ 
is a finitely generated $\mathbb C$-algebra. 
\end{thm}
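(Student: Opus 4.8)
The plan is to pass to the Iitaka fibration and reduce, via the canonical bundle formula, to a finite generation statement on a surface, which is then settled by the minimal model program for generalized polarized pairs. \textbf{Step 1 (Reduction to the Iitaka fibration).} Since a truncated ring $\bigoplus_{k\ge 0}H^0(X,\mathcal O_X(\lfloor km_0(K_X+\Delta)\rfloor))$ is finitely generated if and only if $R(X,\Delta)$ is, and since these rings are preserved under crepant birational modifications and under replacing $(X,\Delta)$ by a log resolution with the reduced exceptional divisor added to $\Delta$, I first replace $(X,\Delta)$ by a log smooth model and then by a birational model carrying the Iitaka fibration $f\colon X\to Y$ of $K_X+\Delta$. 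Here $Y$ is a smooth projective surface (because $\kappa=2$), $f$ has connected fibers, $f_*\mathcal O_X=\mathcal O_Y$, and the restriction of $K_X+\Delta$ to the generic fiber has Kodaira dimension $0$, i.e. $\kappa(X/Y,K_X+\Delta)=0$.

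\textbf{Step 2 (Canonical bundle formula).} Using the theory of lc-trivial fibrations, after further birational modifications of $X$ and of $Y$ one puts $f\colon(X,\Delta)\to Y$ in the form of an lc-trivial fibration, so that the canonical bundle formula of Fujino--Mori and its log canonical refinements apply: $K_X+\Delta\sim_{\mathbb Q}f^*(K_Y+B_Y+M_Y)$, where $(Y,B_Y)$ is log canonical and $M_Y$ is the moduli part. After replacing $Y$ by a sufficiently high birational model, $M_Y$ is a nef $\mathbb Q$-divisor (b-nefness of the moduli part, known for log canonical pairs), so $(Y,B_Y+M_Y)$ is a generalized log canonical pair. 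Since $f_*\mathcal O_X=\mathcal O_Y$ and the round-down discrepancy between $\lfloor m(K_X+\Delta)\rfloor$ and $f^*\lfloor m(K_Y+B_Y+M_Y)\rfloor$ is controlled in the standard way, one gets an isomorphism of truncated rings $R(X,\Delta)\cong R(Y,B_Y+M_Y)$. Moreover $\kappa(Y,K_Y+B_Y+M_Y)=\kappa(X,K_X+\Delta)=2=\dim Y$, so $K_Y+B_Y+M_Y$ is big.

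\textbf{Step 3 (Finite generation on the surface).} It now suffices to show that the generalized log canonical ring of the big generalized log canonical pair $(Y,B_Y+M_Y)$ on the surface $Y$ is finitely generated. Running the generalized minimal model program for $(Y,B_Y+M_Y)$, which terminates since $\dim Y=2$, and using that $K_Y+B_Y+M_Y$ is pseudo-effective (indeed big), one reaches a minimal model $(Y',B_{Y'}+M_{Y'})$ on which $K_{Y'}+B_{Y'}+M_{Y'}$ is nef and big. The base-point-free theorem for generalized pairs then gives that $K_{Y'}+B_{Y'}+M_{Y'}$ is semi-ample, so its section ring is finitely generated; since the steps of the generalized MMP preserve the relevant rings up to truncation, $R(Y,B_Y+M_Y)$, and hence $R(X,\Delta)$, is finitely generated.

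\textbf{Main obstacle.} The genuinely delicate point is Step 2: one must actually bring the Iitaka fibration into lc-trivial form within the log canonical category, where the fibers can have arbitrary dimension, and this requires careful handling of the relative log canonical ring in relative Kodaira dimension $0$ and of the vertical part of $\Delta$ over $Y$. By contrast, everything carried out on the base is unconditional because $\dim Y=2$: the generalized MMP on surfaces, the base-point-free theorem for generalized pairs, and the b-nefness of the moduli part are all available, so in particular the still-open b-semi-ampleness conjecture for the moduli part is not needed.
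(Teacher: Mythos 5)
Your Steps 1--2 (passing to the Iitaka fibration and applying the canonical bundle formula to get $D=K_Y+B_Y+M_Y$ big on a surface with $M_Y$ nef, with $R(X,\Delta)\cong R(Y,B_Y+M_Y)$ up to truncation) and the surface MMP in Step 3 match the paper's reduction, which quotes the proof of Theorem 1.2 of Fujino--Liu and its Lemma 6.1. The gap is the final claim of Step 3: there is no base-point-free theorem for generalized \emph{log canonical} (or generalized dlt) pairs that you can invoke. The absorption trick only works in the generalized klt case, where $L-(K_Y+B_Y+M_Y)$ nef and big gives $L-(K_Y+B_Y)$ nef and big and the ordinary Kawamata--Shokurov theorem applies. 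When $B_Y^{=1}\neq 0$, nef and big $K_Y+B_Y+M_Y$ with $M_Y$ merely nef need \emph{not} be semi-ample: the obstruction sits exactly on the lc centers $C\subset B_Y^{=1}$ with $D\cdot C=0$, where $D|_C$ is numerically trivial of the form $K_C+(\text{points})+M_Y|_C$; for an arbitrary nef divisor $M_Y$ the restriction $M_Y|_C$ to an elliptic curve or a cycle of rational curves can be a non-torsion degree-zero divisor, and then $D|_C$ is not $\mathbb Q$-linearly trivial and $D$ is not semi-ample. So the statement you are outsourcing to a ``generalized BPF theorem'' is false at the level of abstract generalized pairs, and your remark that ``everything carried out on the base is unconditional because $\dim Y=2$'' is exactly where the difficulty hides.

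What rescues the argument --- and what the paper spends Section 3 proving --- is that $M_Y$ is not an arbitrary nef divisor but the moduli part of an (s)lc-trivial fibration, so its restriction to the relevant lc centers carries Hodge-theoretic structure. Concretely, the paper first reduces semi-ampleness of $D$ to semi-ampleness of $D|_{B_Y^{=1}}$ via the lc version of the base-point-free theorem, and then, on the components where $D$ is numerically trivial (a smooth elliptic curve or a cycle of rational curves), shows $M_Y$ restricts to a torsion divisor: one restricts the fibration to the non-klt locus of the total space (this forces the \emph{slc}-trivial, i.e.\ non-normal, setting and requires the connectedness lemma and Corollary 2.3 to control the restriction), applies the one-dimensional semi-ampleness/triviality result (Lemma 3.5, resting on the variation of mixed Hodge structure and Deligne's finiteness of rank-one local subsystems) on each component $C_i$, and finally glues the resulting trivializations over the cycle using the rational section $\varphi$. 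Without this step --- some weak b-semi-ampleness statement for the moduli part along the lc centers --- your proof does not close; so, contrary to your last sentence, an input of precisely this kind (though only over one-dimensional bases, which is a theorem, not the open conjecture) is indispensable.
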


This is a special case of the well-known finite generation conjecture for log canonical pairs 
(cf. \cite[Conjecture 1.1]{fujino-liu}).
In \cite{fujino-liu}, Fujino and the author listed some recent progress on the finite generation conjecture,
and proved it under the assumptions that $(X, \Delta)$ is plt and $\kappa (X, K_X+\Delta)=2$.
Their proof used 
the {\em{lc-trivial fibration}} (in the sense of \cite{fujino-gongyo})
to deal with fibrations by using a Hodge theoretic approach
rather than by running the relative minimal model program as in \cite{bchm}.
We will follow the basic idea in \cite{fujino-liu}
and prove the main result
by using a more general kind of connectedness lemma (Subsection \ref{subsec2.2})
and the slc-trivial fibration theory (Section \ref{sec3}).

This paper can be viewed as a continuation of \cite{fujino-liu}.
Some of the notation and proofs here are the same as those in that paper,
so we recommend the interested readers to read \cite{fujino-liu} as a warm-up.

\begin{ack}
The author would like to thank Professor Osamu Fujino for answering some questions 
on the theory of slc-trivial fibrations.
He would also like to thank the referees for useful comments. 
\end{ack}

We work over $\mathbb C$, the complex number field, throughout 
this paper. We also freely use the basic 
notation of the minimal model program as in 
\cite{fujino-fundamental} and \cite{fujino-foundations}. 
A {\em{variety}} means a reduced separated scheme of finite type over $\mathbb C$.
In this paper, we do not use $\mathbb R$-divisors. 
We only use $\mathbb Q$-divisors. 

\section{Preliminaries}\label{pre}

In this section, we prepare some results needed in our proof of the main theorem.
For the notation and conventions of this paper, we refer to \cite[Preliminaries]{fujino-liu}.

\subsection{Variation of mixed Hodge structures}
Let $S$ be a path connected and locally 1-connected topological space. 
A {\em{local system}} on $S$ is a locally constant sheaf $\mathbb V$ 
of $\mathbb Q$-vector spaces on $S$ (cf. \cite[Lemma B.34]{ps}).
In particular, a {\em{constant system}} is a constant sheaf $\mathbb V$.
One example of the local systems is the so-called {\em{variation of (mixed) Hodge structures}}.
We follow the notation and definitions in \cite{fujino-fujisawa} and recommend the interested readers 
to read \cite{fujino-fujisawa} for more details.

The following theorem is taken out from \cite[Theorem 7.1]{fujino-fujisawa}.
It shows that for any  
graded polarizable variation of $\mathbb Q$-mixed Hodge structures
given in this paper,
the  $\mathcal O_S$-module $\mathcal V$
is directly defined as $\mathcal O_S\otimes \mathbb V$.
It follows that  $\alpha\colon\mathbb V \to \mathcal V:=\mathcal O_S\otimes \mathbb V$ 
is given by simply tensoring $\mathcal O_S$
and thus induces trivially an identification
$\mathcal O_S\otimes \mathbb V \simeq \mathcal V$
of $\mathcal O_S$-modules. 
We can omit the morphism $\alpha$ since there is no danger of confusion.

\begin{thm}[{\cite[Theorem 7.1]{fujino-fujisawa}}]\label{d-thm2.3}
Let $(V, T)$ be a simple normal crossing pair such that $T$ is reduced, and 
$f\colon V\to W$ a projective surjective morphism onto a smooth variety $W$. 
Assume that every stratum of $(V, T)$ dominates $W$. 
Let $\Sigma$ be a simple normal crossing divisor on $W$ such that 
every stratum of $(V, T)$ is smooth over $W^*=W\setminus \Sigma$. 
Put $V^*=f^{-1}(W^*)$, $T^*=T|_{V^*}$. 
Let $\iota\colon V^*\setminus T^*\hookrightarrow V^*$ be the natural 
open immersion. 
Then the local system $\mathbb V_k:=R^k(f|_{V^*})_*\iota_!\mathbb Q_{V^*\setminus T^*}$ 
underlies a graded polarizable admissible variation of 
$\mathbb Q$-mixed Hodge structure $\mathscr V=((\mathbb V, W), (\mathcal V, W, F), \Id)$
on $W^*$ for every $k$. 
Note that $\mathcal V_k:=\mathcal O_{W^*}\otimes\mathbb V_k$.
\end{thm}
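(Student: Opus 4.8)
This statement is \cite[Theorem 7.1]{fujino-fujisawa}; for the reader's convenience we outline the structure of its proof. The plan is to produce, in order, the local system $\mathbb V_k$ together with a fibrewise mixed Hodge structure, then the Hodge filtration exhibited as a filtration of $\mathcal V_k=\mathcal O_{W^*}\otimes\mathbb V_k$ by holomorphic subbundles satisfying Griffiths transversality, then the weight filtration and graded polarizability, and finally admissibility along $\Sigma$. For the local system: over $W^*$ every stratum of $(V,T)$ is smooth, so stratifying $V^*$ by the closures of the strata of $(V^*,T^*)$ yields a Whitney stratification all of whose strata are smooth and proper over $W^*$, and the stratified version of Ehresmann's fibration theorem makes $(V^*,T^*)\to W^*$ a topologically locally trivial family of simple normal crossing pairs; hence $\mathbb V_k=R^k(f|_{V^*})_*\iota_!\mathbb Q_{V^*\setminus T^*}$ is locally constant. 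The exact sequence $0\to\iota_!\mathbb Q_{V^*\setminus T^*}\to\mathbb Q_{V^*}\to\mathbb Q_{T^*}\to0$ identifies its fibre over $w\in W^*$ with the relative cohomology $H^k(V_w,T_w;\mathbb Q)$, which carries Deligne's mixed Hodge structure for the simple normal crossing pair $(V_w,T_w)$, built from the cohomology of its strata by a \v{C}ech-type construction (cf. \cite{ps}).

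Next comes the Hodge side. One builds a relative filtered de Rham complex on $V^*$ adapted to $(V^*,T^*)$ and to the functor $\iota_!$, whose b\^ete filtration produces candidate subsheaves $F^p\subseteq R^k(f|_{V^*})_*(\text{that complex})$. Two facts are needed: the relative Hodge-to-de Rham spectral sequence degenerates at $E_1$, and the direct images of the graded pieces of the filtered complex are locally free and compatible with base change --- this reduces, by cohomology and base change, to the fibrewise $E_1$-degeneration, which holds by Deligne's theory for each $(V_w,T_w)$. Moreover the Gauss--Manin connection $\nabla$ arising from the Katz--Oda (Koszul) filtration on the log de Rham complex of the total space satisfies $\nabla F^p\subseteq F^{p-1}\otimes\Omega^1_{W^*}$. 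Transporting $F^\bullet$ across the relative de Rham comparison isomorphism $\mathcal O_{W^*}\otimes\mathbb V_k\xrightarrow{\ \sim\ }R^k(f|_{V^*})_*(\text{that complex})$ puts the Hodge filtration on $\mathcal V_k=\mathcal O_{W^*}\otimes\mathbb V_k$; in particular the comparison morphism $\alpha$ is the tautological one, $\Id$, which is why the theorem can assert $\mathcal V_k=\mathcal O_{W^*}\otimes\mathbb V_k$ outright. Because fibrewise this recovers Deligne's mixed Hodge structure on $H^k(V_w,T_w)$, the datum $\mathscr V=((\mathbb V_k,W),(\mathcal V_k,W,F),\Id)$ is a variation of $\mathbb Q$-mixed Hodge structure on $W^*$.

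Graded polarizability follows from the weight spectral sequence attached to the stratification of $(V^*,T^*)$: its $E_1$-page is a finite direct sum of cohomology local systems of the closed strata, each of which is smooth and proper over $W^*$ and therefore underlies a polarizable variation of pure Hodge structure; since the spectral sequence degenerates at $E_2$, every $\Gr^W_m\mathbb V_k$ is a subquotient of such variations and inherits a polarization.

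The remaining and genuinely difficult point is admissibility along $\Sigma$: one must exhibit the canonical (Deligne) extension $\overline{\mathcal V}_k$ over $W$, check that $F^\bullet$ extends to a filtration of $\overline{\mathcal V}_k$ by subbundles, and prove that the relative monodromy weight filtration of $W_\bullet$ with respect to the logarithm of each local unipotent monodromy exists. We would reduce this to the one-variable case via Kashiwara's curve test, restricting $\mathscr V$ to general smooth curves meeting $\Sigma$ transversally at a single point; there, after a semistable reduction of the family of pairs, the limiting object is computed by a log de Rham complex with poles along the central fibre, and the existence of the relative monodromy weight filtration is deduced from the pure case on $\Gr^W$ --- Schmid's $\mathrm{SL}_2$-orbit theorem together with the local invariant cycle theorem --- by an induction along $W_\bullet$ in the style of Steenbrink--Zucker and Kashiwara. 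Controlling precisely this interaction between the weight filtration and the boundary monodromy is where the real work lies, and it is exactly the content packaged by \cite[Theorem 7.1]{fujino-fujisawa}.
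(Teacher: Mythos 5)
The paper offers no proof of this statement: it is quoted verbatim as \cite[Theorem 7.1]{fujino-fujisawa}, so there is nothing internal to compare against, and your outline correctly identifies it as such. Your sketch of the external proof (topological local triviality giving the local system with fibres $H^k(V_w,T_w;\mathbb Q)$, the filtered relative de Rham complex with $E_1$-degeneration, base change and Katz--Oda transversality, graded polarizability via the weight spectral sequence of the strata, and admissibility reduced to curves in the style of Steenbrink--Zucker and Kashiwara) is consistent with the strategy actually carried out in \cite{fujino-fujisawa}, so it is an acceptable account of the cited result.
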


\subsection{Connectedness lemma}\label{subsec2.2}

\cite[Section 4]{fujino-liu} showed a kind of connectedness lemma 
for plt pairs. But it is not sufficient if we try to deal with finite generation conjecture
for lc pairs. For our purposes,
we need a more general kind of connectedness lemma as follows.
Note that it is also a special case of adjunction formula for quasi-log canonical pairs 
(cf. \cite[Theorem 6.3.5]{fujino-foundations}).

\begin{lem}[Connectedness]\label{conn-lem}
Let $f\colon V\to W$ be a surjective morphism 
from a smooth projective variety $V$ onto 
a normal projective variety $W$. 
Let $B_V$ be a $\mathbb Q$-divisor 
on $V$ such that 
$K_V+B_V\sim _{\mathbb Q, f}0$, 
$(V, B_V)$ sub lc, and $\Supp B_V$ a simple 
normal crossing divisor. 
Assume that the natural map 
$$
\mathcal O_W\to f_*\mathcal O_V(\lceil -(B^{<1}_V)\rceil)
$$
is an isomorphism. 
Let $Z$ be a union of some images of stratum  of $B^{=1}_V$ 
such that $Z\subsetneq W$.
Let $S$ be the union of strata of $B^{=1}_V$ mapping into $Z$.
Assume that $S$ is a union of irreducible components of $B^{=1}_V$.
%By further resolutions, 
%we can assume that $S$ is a  union of irreducible components  of $B^{=1}_V$ 
%(cf.  \cite[Theorem 6.3.5 (i)]{fujino-foundations}).
Put $K_S+B_S=(K_V+B_V)|_S$ by adjunction. 
Then $(S, B_S)$ is sub slc and the natural map 
$$
\mathcal O_Z\to g_*\mathcal O_S(\lceil -(B^{<1}_S)\rceil)
$$ 
is an isomorphism, where $g:=f|_S$. 
In particular, $S$ is connected  if $Z$ is connected. 
\end{lem}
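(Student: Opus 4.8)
The plan is to deduce everything from a single short exact sequence on $V$ together with the torsion-freeness theorem of Ambro and Fujino, the combinatorial hypothesis on $S$ entering precisely to annihilate the one first direct image that could obstruct the argument. Set $L:=\lceil -(B_V^{<1})\rceil$ and $T':=B_V^{=1}-S$; since $(V,B_V)$ is sub lc, $L$ is effective and $\Supp L$ is disjoint from $B_V^{=1}\supseteq S$. Restricting $K_V+B_V$ to a component $S_i$ of $S$ and using adjunction on the smooth variety $S_i$ gives
$$
(K_V+B_V)|_{S_i}=K_{S_i}+\Big(\textstyle\sum_{j\neq i,\,S_j\subseteq S}S_j|_{S_i}\Big)+T'|_{S_i}+B_V^{<1}|_{S_i},
$$
where the double locus and $T'|_{S_i}$ are reduced simple normal crossing divisors and $B_V^{<1}|_{S_i}$ has coefficients $<1$; as $\Supp B_V$ is simple normal crossing, this shows $(S,B_S)$ is sub slc (see \cite{fujino-foundations}) and that $\lceil -(B_S^{<1})\rceil=L|_S$, a Cartier divisor on $S$ pulled back from $V$ (using that $\Supp L$ avoids $S$).

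Next I would push forward the short exact sequence
$$
0\to \mathcal O_V(L-S)\to \mathcal O_V(L)\to \mathcal O_S(L|_S)\to 0
$$
along $f$. By hypothesis $f_*\mathcal O_V(L)=\mathcal O_W$; and because $S$ is reduced, $\Supp L$ is disjoint from $S$, and $f(S)=Z$ (immediate from the definitions of $S$ and $Z$), a section of $\mathcal O_V(L)$ lies in $\mathcal O_V(L-S)$ exactly when the corresponding element of $\mathcal O_W$ vanishes along $Z$, so $f_*\mathcal O_V(L-S)=\mathcal I_Z$. Hence one obtains an exact sequence
$$
0\to \mathcal I_Z\to\mathcal O_W\to g_*\mathcal O_S(L|_S)\to R^1f_*\mathcal O_V(L-S),
$$
so the natural map $\mathcal O_Z\to g_*\mathcal O_S(L|_S)$ is injective and its cokernel $\mathcal C$ embeds into $R^1f_*\mathcal O_V(L-S)$; everything reduces to proving $\mathcal C=0$.

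For this, using $K_V+B_V\sim_{\mathbb Q,f}0$ one computes
$$
L-S\sim_{\mathbb Q,f}K_V+\big(B_V^{<1}+L\big)+T',
$$
with $\Delta':=B_V^{<1}+L$ effective, $\lfloor\Delta'\rfloor=0$, and $T'$ reduced, all with simple normal crossing support, so the torsion-freeness theorem of Ambro--Fujino (\cite{fujino-foundations}) shows that every associated prime of $R^1f_*\mathcal O_V(L-S)$ is the generic point of $V$ or of $f(W')$ for some stratum $W'$ of $T'$. Since $\Supp\mathcal C\subseteq f(S)=Z\subsetneq W$, the generic point of $V$ is not associated to $\mathcal C$; and since $S$ is a union of irreducible components of $B_V^{=1}$ which already contains every stratum of $B_V^{=1}$ mapping into $Z$, no stratum $W'$ of $T'=B_V^{=1}-S$ can map into $Z$ (in a simple normal crossing configuration a stratum formed only from components not lying in $S$ is not contained in $S$). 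Thus no candidate associated prime of $\mathcal C$ is contained in $Z$, forcing $\mathcal C=0$ and the isomorphism $\mathcal O_Z\xrightarrow{\ \sim\ }g_*\mathcal O_S(\lceil -(B_S^{<1})\rceil)$. For the last assertion, $\lceil -(B_S^{<1})\rceil\geq 0$ gives $\mathcal O_Z\hookrightarrow g_*\mathcal O_S\hookrightarrow g_*\mathcal O_S(\lceil -(B_S^{<1})\rceil)$ with composite this isomorphism, hence $g_*\mathcal O_S=\mathcal O_Z$; comparing global sections of the structure sheaves then shows $S$ and $Z$ have equally many connected components, so $S$ is connected whenever $Z$ is.

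The deep ingredient, the torsion-freeness theorem, is simply quoted, so the real work — and the main obstacle — is the bookkeeping around it: checking that $f_*\mathcal O_V(L-S)$ is exactly the reduced ideal sheaf $\mathcal I_Z$, that $\lceil -(B_S^{<1})\rceil$ is correctly identified with $L|_S$ on the normalization of $S$, and, above all, that the hypothesis ``$S$ is a union of irreducible components of $B_V^{=1}$'' is precisely the condition preventing a lower-dimensional stratum of $T'$ from mapping into $Z$. Without this hypothesis such a stratum could map into $Z$ and contribute an associated prime to $\mathcal C$, breaking the argument; so it cannot be dropped.
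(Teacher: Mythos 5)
Your proposal is correct and takes essentially the same approach as the paper: the paper uses the identical short exact sequence $0\to \mathcal O_V(\lceil -B^{<1}_V\rceil -S)\to \mathcal O_V(\lceil -B^{<1}_V\rceil)\to \mathcal O_S(\lceil -B^{<1}_S\rceil)\to 0$ together with the observation that no lc stratum of $(V,\{B_V\}+B^{=1}_V-S)$ is mapped into $Z$, but it delegates your explicit bookkeeping (the identification $f_*\mathcal O_V(\lceil -B^{<1}_V\rceil -S)=\mathcal I_Z$ and the torsion-freeness/associated-prime argument) to the proofs of Theorems 5.6.3 and 6.3.5(i) in \cite{fujino-foundations}. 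One small slip: sub-lc-ness does not make $\Supp \lceil -B^{<1}_V\rceil$ \emph{disjoint} from $B^{=1}_V$ (they may intersect), only free of common components with $S$, which is the weaker fact your argument actually uses.
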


\begin{proof}
The proof is very similar to \cite[Lemma 4.1 and Corollary 4.2]{fujino-liu}. 
We can easily check that $(S, B_S)$ is sub slc by adjunction. 
Consider the following short exact sequence 
$$
0\to \mathcal O_V(\lceil -(B^{<1}_V)\rceil -S)
\to \mathcal O_V(\lceil -(B^{<1}_V)\rceil) \to \mathcal O_S
(\lceil -B^{<1}_S\rceil)\to 0. 
$$ 
Note that $B^{<1}_V|_S=B^{<1}_S$ holds. 
By \cite[Theorem 5.6.3]{fujino-foundations} and our assumptions of $Z$ and $S$,
no lc stratum of $(V, \{B_V\}+B^{=1}_V-S)$ are mapped 
into $Z$ by $f$. 
By the same proof of \cite[Theorem 6.3.5 (i)]{fujino-foundations}, 
the natural map $\mathcal O_Z\to g_*\mathcal O_S(\lceil -B^{<1}_S\rceil)$ 
is an isomorphism. 
In particular, the natural map $\mathcal O_Z\to g_*\mathcal O_S$ 
is an isomorphism. 
This implies that $S$ is connected if $Z$ is connected.
\end{proof}

The following corollary 
allows us to remove those strata of $B^{=1}_V$ in $S$ which are not dominant onto $Z$ 
when $Z$ is normal.

\begin{cor}\label{norm-cor}
Notation as in Lemma \ref{conn-lem}. Assume further that $Z$ is irreducible and normal. 
Let $S'$ be the union of irreducible components of $B^{=1}_V$ dominant onto $Z$.
Put $K_{S'}+B_{S'}=(K_V+B_V)|_{S'}$ by adjunction. 
Then $(S', B_{S'})$ is also sub slc and the natural map 
$$
\mathcal O_Z\to g'_*\mathcal O_{S'}(\lceil -(B^{<1}_{S'})\rceil)
$$ 
is an isomorphism, where $g'\colon=f|_{S'}$. In particular, $S'$ is connected. 
\end{cor}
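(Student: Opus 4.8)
The plan is to bootstrap from Lemma~\ref{conn-lem}, which already applies to the full $S$, and to exploit the extra hypothesis that $Z$ is irreducible and normal through a codimension argument on $Z$; alternatively the statement may be read off from the adjunction formula for quasi-log canonical pairs \cite[Theorem~6.3.5]{fujino-foundations} applied to the union of lc strata dominating the normal center $Z$, but I will sketch the elementary route. Since $S'$ is a union of irreducible components of $B^{=1}_V$, hence a union of slc strata of the sub slc pair $(S,B_S)$ produced by Lemma~\ref{conn-lem}, adjunction shows that $(S',B_{S'})$ is sub slc and that $B_{S'}=B_S|_{S'}+C$, where $C$ is the reduced conductor divisor cut out on $S'$ by the components of $S$ not lying in $S'$. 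Because $\Supp B_V$ is simple normal crossing, everything restricts to the strata with reduced multiplicities, so $\lceil -(B^{<1}_{S'})\rceil=\lceil -(B^{<1}_S)\rceil|_{S'}$ and $\lceil -(B^{<1}_{S'})\rceil\geq 0$, and restriction of sections gives a short exact sequence on $S$
\[
0\to \mathcal O_{S''}\bigl(\lceil -(B^{<1}_S)\rceil|_{S''}-C\bigr)\to \mathcal O_S\bigl(\lceil -(B^{<1}_S)\rceil\bigr)\to \mathcal O_{S'}\bigl(\lceil -(B^{<1}_{S'})\rceil\bigr)\to 0,
\]
where $S''=\overline{S\setminus S'}$ is the union of those components of $B^{=1}_V$ in $S$ that do not dominate $Z$.

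For the isomorphism $\mathcal O_Z\to g'_*\mathcal O_{S'}\bigl(\lceil -(B^{<1}_{S'})\rceil\bigr)$, injectivity is immediate: each component of $S'$ is proper over and dominates the irreducible $Z$, so $g'$ is surjective, whence $\mathcal O_Z\hookrightarrow g'_*\mathcal O_{S'}\hookrightarrow g'_*\mathcal O_{S'}\bigl(\lceil -(B^{<1}_{S'})\rceil\bigr)$ because the round-up is effective and $Z$ is reduced. For surjectivity, put $U=Z\setminus g''(S'')$ with $g''=f|_{S''}$; this is a dense open subset of $Z$ since $Z$ is irreducible and $g''(S'')\subsetneq Z$. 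Over $U$ we have $S=S'$, so Lemma~\ref{conn-lem} localized over $U$ shows the map is an isomorphism there. If $Z\setminus U$ has codimension $\geq 2$ in $Z$ we are done, because $g'_*\mathcal O_{S'}\bigl(\lceil -(B^{<1}_{S'})\rceil\bigr)$ is torsion-free on the integral normal variety $Z$ and therefore sits inside $j_*\bigl(g'_*\mathcal O_{S'}(\lceil -(B^{<1}_{S'})\rceil)|_U\bigr)=j_*\mathcal O_U=\mathcal O_Z$ for $j\colon U\hookrightarrow Z$.

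The main obstacle is the codimension-one part of $Z\setminus U$, namely prime divisors $D\subseteq Z$ covered by a component of $S''$ (for instance when such a component maps onto a curve inside a surface $Z$). To handle these I would push the displayed short exact sequence forward along $g=f|_S$, substitute the isomorphism $\mathcal O_Z\cong g_*\mathcal O_S(\lceil -(B^{<1}_S)\rceil)$ of Lemma~\ref{conn-lem}, and identify the cokernel of $\mathcal O_Z\to g'_*\mathcal O_{S'}(\lceil -(B^{<1}_{S'})\rceil)$ with a torsion subsheaf of $R^1g_*\mathcal O_{S''}\bigl(\lceil -(B^{<1}_S)\rceil|_{S''}-C\bigr)$ that dies in $R^1g_*\mathcal O_S(\lceil -(B^{<1}_S)\rceil)$; after localizing at the generic point $\eta_D$ of $D$ this becomes a statement about sections over $S'\times_Z\operatorname{Spec}\mathcal O_{Z,\eta_D}$. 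Since $S''$ does not dominate $Z$, one cannot directly invoke torsion-freeness of its own higher direct images; the resolution must use the conductor shift by $-C$ together with the torsion-freeness and vanishing theorems for round-up sheaves on slc pairs in the spirit of Koll\'ar and Fujino (\cite[Chapter~5]{fujino-foundations}) and the restriction on the lc strata of $(V,B_V)$ coming from the choice of $S$ in Lemma~\ref{conn-lem} (\cite[Theorem~5.6.3]{fujino-foundations}). This is the slc analogue of \cite[Corollary~4.2]{fujino-liu}, and it is the step I expect to cost the most work. Granting the isomorphism $\mathcal O_Z\cong g'_*\mathcal O_{S'}(\lceil -(B^{<1}_{S'})\rceil)$, the effectivity of the round-up squeezes it to $\mathcal O_Z\cong g'_*\mathcal O_{S'}$, so $H^0(S',\mathcal O_{S'})=H^0(Z,\mathcal O_Z)=\mathbb C$ when $Z$ is connected, i.e. $S'$ is connected.
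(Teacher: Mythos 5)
Your preparatory steps are fine: adjunction gives that $(S',B_{S'})$ is sub slc, injectivity of $\mathcal O_Z\to g'_*\mathcal O_{S'}(\lceil -(B^{<1}_{S'})\rceil)$ is easy, the map is an isomorphism over $U=Z\setminus g''(S'')$ by restricting Lemma \ref{conn-lem}, and the case $\mathrm{codim}_Z(Z\setminus U)\geq 2$ follows from normality of $Z$ together with torsion-freeness of $g'_*\mathcal O_{S'}(\lceil -(B^{<1}_{S'})\rceil)$. But the heart of the corollary is exactly the case you leave open, namely when a component of $S''=\overline{S\setminus S'}$ dominates a prime divisor of $Z$, and the mechanism you sketch for it cannot work as arranged. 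Pushing your short exact sequence forward identifies the cokernel of $\mathcal O_Z\to g'_*\mathcal O_{S'}(\lceil -(B^{<1}_{S'})\rceil)$ with a subsheaf of $R^1g_*\mathcal O_{S''}(\lceil -(B^{<1}_S)\rceil|_{S''}-C)$; since no component of $S''$ dominates $Z$, that $R^1$ sheaf is itself entirely a torsion $\mathcal O_Z$-module, so no torsion-freeness or strict support statement applied to it can force a torsion cokernel to vanish -- a point you concede yourself, after which you only express the expectation that conductor shifts and Koll\'ar--Fujino type theorems will finish the job, without an actual argument. Proving instead that the connecting map vanishes amounts to the injectivity of $R^1g_*\mathcal O_{S''}(\cdot)\to R^1g_*\mathcal O_S(\cdot)$, which is equivalent to the surjectivity you are trying to establish, so the outline is circular at its critical step. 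This is a genuine gap, and it sits precisely where the real content of the statement lies.

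The paper closes this step by a different route: it quotes \cite{fujino-liu-normal}. Claim 1 there shows that $g'=f|_{S'}$ factors through the normalization $\widetilde Z\to Z$ (an isomorphism here since $Z$ is assumed normal), and Claim 2 proves exactly that $\mathcal O_{\widetilde Z}\to g'_*\mathcal O_{S'}(\lceil -(B^{<1}_{S'})\rceil)$ is an isomorphism; that argument keeps the twisted sheaf whose higher direct image must be controlled on the part dominating $Z$ (with the conductor subtracted there), so that the strict support condition in the style of \cite[Theorem 6.3.5]{fujino-foundations} applies to its associated primes -- the control your arrangement, with the twisted sheaf placed on $S''$, forfeits. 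To repair your proof you would need to redo the exact sequence with the roles of $S'$ and $S''$ exchanged and carry out that strict-support argument (essentially reproving \cite[Claim 2]{fujino-liu-normal}), or simply invoke \cite[Claims 1 and 2]{fujino-liu-normal} as the paper does.
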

\begin{proof}
Consider the following commutative diagram:
$$
\xymatrix{
S' \ar[d]_{g'}\ar@{^(->}[r]^{\iota}& S\ar[d]^{g}\\ 
\widetilde{Z} \ar[r]_{p}& Z
}
$$
where $\iota \colon S'\to S $ is the natural closed immersion 
and $p$ is the normalization by \cite[Claim 1]{fujino-liu-normal}.
Since $Z$ is normal, $p$ is an isomorphism.
Then the rest of the proof is exactly the same as \cite[Claim 2]{fujino-liu-normal}.
\end{proof}

\subsection{MMP for projective dlt surfaces}
Finally, we give a special case of the minimal model 
program for projective dlt surfaces. 
It plays the same role in this paper as \cite[Lemma 6.1]{fujino-liu} in 
 \cite[Theorem 1.2]{fujino-liu}.
The proof is exactly the same as \cite[Lemma 6.1]{fujino-liu},
so we omit it here.

\begin{lem}\label{mmp-lem} 
Let $(X, B)$ be a projective 
dlt surface such that 
$B$ is a $\mathbb Q$-divisor and let $M$ be a nef $\mathbb Q$-divisor 
on $X$. 
Assume that $K_X+B+M$ is big. 
Then we can run the minimal model program with respect 
to $K_X+B+M$ and 
get a sequence of extremal contraction 
morphisms 
$$
(X, B+M)=:(X_0, B_0+M_0)\overset {\varphi_0}\longrightarrow \cdots 
\overset{\varphi_{k-1}}\longrightarrow (X_k, B_k+M_k)=:
(X^*, B^*+M^*)
$$ 
with the following properties: 
\begin{itemize}
\item[(i)] each $\varphi_i$ is a $(K_{X_i}+B_i+M_i)$-negative 
extremal birational contraction morphism, 
\item[(ii)] $K_{X_{i+1}}=\varphi_{i*}K_{X_i}$, 
$B_{i+1}=\varphi_{i*}B_i$, 
and $M_{i+1}=\varphi_{i*}M_i$ for every $i$, 
\item[(iii)] $M_i$ is nef for every $i$, and 
\item[(iv)] $K_{X^*}+B^*+M^*$ is nef 
and big. 
\end{itemize}
\end{lem}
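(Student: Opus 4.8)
The plan is to run the $(K_X+B+M)$-minimal model program on the surface $X$ directly and to observe that the bigness hypothesis forces it to behave like an ordinary dlt surface MMP. Since $(X,B)$ is a projective dlt surface, $X$ is $\mathbb Q$-factorial with rational singularities, so the cone and contraction theorems for the dlt pair $(X,B)$ apply. If $K_X+B+M$ is already nef we take $k=0$; otherwise, because $M$ is nef, any curve $C$ with $(K_X+B+M)\cdot C<0$ also has $(K_X+B)\cdot C<0$, so by the cone theorem there is a $(K_X+B+M)$-negative $(K_X+B)$-extremal ray $R_0\subset\overline{NE}(X)$, and the contraction $\varphi_0\colon X\to X_1$ of $R_0$ exists by the contraction theorem.

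First I would check that $\varphi_0$ is birational. If it were of fiber type, a general fibre $F$ would move in a family covering $X$ and satisfy $(K_X+B+M)\cdot F<0$; but writing $K_X+B+M\sim_{\mathbb Q}A+E$ with $A$ ample and $E\geq 0$ (using that $K_X+B+M$ is big) and choosing $F\not\subset\Supp E$ gives $(K_X+B+M)\cdot F>0$, a contradiction. So $\varphi_0$ contracts a single irreducible curve $C$ to a point, with $C^2<0$. By the standard theory of dlt surface MMP, $X_1$ is again $\mathbb Q$-factorial, $(X_1,B_1)$ with $B_1=\varphi_{0*}B$ is dlt, and $K_{X_1}=\varphi_{0*}K_X$; this gives (i) and (ii) for $\varphi_0$.

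It remains to see that the hypotheses propagate, i.e.\ (iii) and (iv). Since $\varphi_0$ is birational, $K_{X_1}+B_1+M_1=\varphi_{0*}(K_X+B+M)$ is still big, because the pushforward of a big divisor under a birational morphism of projective varieties is big. For the nef-ness of $M_1:=\varphi_{0*}M$, write $\varphi_0^*M_1=M+aC$ with $a\in\mathbb Q$; intersecting with $C$ and using the projection formula gives $a=-(M\cdot C)/C^2\geq 0$, since $M\cdot C\geq 0$ and $C^2<0$. Then for any irreducible curve $D\subset X_1$ with strict transform $\widetilde D$ on $X$ we have $M_1\cdot D=\varphi_0^*M_1\cdot\widetilde D=M\cdot\widetilde D+a(C\cdot\widetilde D)\geq 0$, because $M$ is nef and $C\neq\widetilde D$. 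Hence $M_1$ is nef. Thus $(X_1,B_1)$ together with $M_1$ again satisfies all the hypotheses, and we may repeat the construction.

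Finally, termination is immediate: each $\varphi_i$ is divisorial, so $\rho(X_{i+1})=\rho(X_i)-1$, and after finitely many steps we reach $(X^*,B^*+M^*)$ with $K_{X^*}+B^*+M^*$ nef; it is big by the same pushforward argument, which gives (iv). The step requiring the most care is the exclusion of fiber-type contractions, as this is where bigness enters in an essential way; all the other ingredients are the routine two-dimensional MMP, and the whole argument parallels \cite[Lemma 6.1]{fujino-liu} verbatim.
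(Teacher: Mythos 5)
Your proof is correct and takes essentially the same route as the paper, which omits the argument and simply cites \cite[Lemma 6.1]{fujino-liu}: one runs the $(K_X+B+M)$-MMP using that a $(K_X+B+M)$-negative class yields (via the cone theorem for the dlt pair $(X,B)$, since $M$ is nef) a $(K_X+B)$-negative extremal ray on which $K_X+B+M$ is also negative, excludes fiber-type contractions by bigness, checks that nefness of $M$ and bigness of $K+B+M$ are preserved under birational pushforward on surfaces, and terminates by the drop of the Picard number. The only compressed point is the passage from the existence of a curve negative for both divisors to the existence of an extremal ray negative for both, which needs the standard decomposition of the negative class given by the cone theorem, but this is routine and does not affect correctness.
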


\section{On slc-trivial fibrations}\label{sec3}
Recently, Fujino generalized the {\em{klt-trivial fibration}} in \cite{ambro-shokurov} and 
the {\em{lc-trivial fibration}} in \cite{fujino-gongyo}
to the so-called {\em{slc-trivial fibration}} in \cite{fujino-slc-trivial},
where using some deep results of theory of variations 
of mixed Hodge structures on cohomology with compact support.
For more details about slc-trivial fibrations, 
see \cite{fujino-slc-trivial} and \cite{fujino-fujisawa-liu}.

Let $f\colon V\to W$ be a projective surjective morphism from a projective simple normal crossing 
variety $V$ onto a normal projective variety $W$ such that
every stratum of $V$ is dominant onto $W$ and $f_* \mathcal O_V= \mathcal O_W$.
Let $B_V$ be a $\mathbb Q$-divisor on $V$ such that 
$(V, B_V)$ is sub slc and $\Supp B_V$ a simple normal crossing divisor. 
Put 
$$
B_W:=\sum _P (1-b_P)P, 
$$ 
where $P$ runs over prime divisors on $W$ and 
$$
b_P:=\max \left\{t \in \mathbb Q \mid
 {\text{$(V, B_V+tf^*P)$ is sub slc over the generic point of $P$}} \right\}.  
$$ 
It is easy to see that $B_W$ is a well-defined 
$\mathbb Q$-divisor on $W$ (cf. \cite[4.5]{fujino-slc-trivial}). 
We call $B_W$ the {\em{discriminant $\mathbb Q$-divisor}} 
of $f\colon (V, B_V)\to W$. 
We assume that the natural map 
$$
\mathcal O_W\to f_*\mathcal O_V(\lceil -(B^{<1}_V)\rceil)
$$ 
is an isomorphism. Then the same as \cite[Lemma 5.1]{fujino-liu}, 
we immediately get that $B_W$ is a boundary $\mathbb Q$-divisor on $W$. 

From now on, we assume that 
$K_V+B_V\sim _{\mathbb Q, f} 0$. Let $b=\min \{m\in \mathbb Z_{>0} \mid m(K_F+B_F)\sim 0\}$
where $F$ is a general fiber of $f$ and $K_F+B_F=(K_V+B_V)|_F$.
Then we can take a 
$\mathbb Q$-Cartier $\mathbb Q$-divisor $D$ on $W$ and
a rational function $\varphi\in \Gamma (V, \mathcal K^*_V)$ (see \cite[Section 6]{fujino-slc-trivial})
such that
$$
K_V+B_V+\frac{1}{b}(\varphi)=f^*D.
$$ 
Then put $$M_W:=D-K_W-B_W, $$where 
$K_W$ is the canonical divisor of $W$. 
We call $M_W$ the {\em{moduli $\mathbb Q$-divisor}} 
of
$K_V+B_V+\frac{1}{b}(\varphi)=f^*D$. 
Under above assumptions and definitions, such a morphism $f\colon (V, B_V) \to (W, D)$ 
is a kind of (basic) {\em{slc-trivial fibrations}} defined in \cite[Definition 4.1]{fujino-slc-trivial}
and we call 
$$
D=K_W+B_W+M_W
$$
the  {\em{structure decomposition}}.
Note that $D$ is uniquely determined by $\varphi$ once $K_V$, $K_W$ and $B_V$
are fixed (cf. \cite[(2.6.i)]{mori},
\cite[Proposition 4.2]{fujino-mori} or \cite[Remark 2.5]{ambro-shokurov}); thus so is $M_W$. 
Note also that $\varphi$ can be viewed as a b-divisor 
in the sense of \cite[1.2 and Example 1.1 (2)]{ambro-shokurov} or 
\cite[Definition 2.12]{fujino-slc-trivial}.

Based on the theory of slc-trivial fibrations, 
we get a useful corollary from \cite{fujino-fujisawa-liu}
which is a generalization of \cite[Theorem 0.1]{ambro-shokurov} and \cite[Theorem 1.4]{floris}.

\begin{thm}[{\cite[Corollary 1.4]{fujino-fujisawa-liu}}]\label{slc-curve}
Notation as above. If $\dim W=1$, then the moduli 
$\mathbb Q$-divisor $M_W$ is semi-ample.
\end{thm}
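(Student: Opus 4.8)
The plan is to follow the strategy of Ambro and Floris for klt- and lc-trivial fibrations (\cite{ambro-shokurov}, \cite{floris}), but feeding in the \emph{variation of mixed} Hodge structure attached to an slc-trivial fibration (Theorem \ref{d-thm2.3}) in place of the pure variation available in those cases; supplying this extra input is precisely the role of \cite{fujino-fujisawa-liu}. The argument has two stages: first reduce to the situation in which all local monodromies are unipotent by a finite base change, and then read $M_W$ off from a distinguished piece of the Hodge filtration of the relevant admissible variation of mixed Hodge structure, where a semi-positivity theorem yields nefness and its equality case yields semi-ampleness.

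First I would choose, by Kawamata's covering trick applied along $\Sigma$, a finite surjective morphism $\pi\colon W'\to W$ from a smooth projective curve together with a model of the base-changed fibration $f'\colon (V', B_{V'})\to (W', D')$ (again an slc-trivial fibration, after passing to a simple normal crossing model as in Theorem \ref{d-thm2.3}) whose local monodromies around $\pi^{-1}(\Sigma)$ are all unipotent. The moduli part is compatible with base change, so $M_{W'}=\pi^*M_W$, the ramification of $\pi$ being absorbed into the discriminant part (cf. \cite[Section 4]{fujino-slc-trivial}). Since $W$ and $W'$ are smooth projective curves, semi-ampleness of a $\mathbb Q$-divisor descends along $\pi$: if $\deg M_W>0$ then $M_W$ is ample by Nakai--Moishezon, while if $\pi^*M_W$ is torsion then so is $M_W$, because $\pi^*\colon \mathrm{Pic}^0(W)\to \mathrm{Pic}^0(W')$ has finite kernel. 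Hence it is enough to treat $M_{W'}$, and I may assume from now on that all local monodromies of $f$ are unipotent.

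Next I would identify $M_W$ Hodge-theoretically. Applying Theorem \ref{d-thm2.3} to an snc model of $f$ (or of its index-one cover), the local system $\mathbb V_k=R^k(f|_{V^*})_*\iota_!\mathbb Q_{V^*\setminus T^*}$ for the appropriate $k$ underlies a graded polarizable admissible variation of $\mathbb Q$-mixed Hodge structure on $W^*$, with $\mathcal V_k=\mathcal O_{W^*}\otimes\mathbb V_k$. The slc-trivial fibration formalism of \cite{fujino-slc-trivial} and \cite{fujino-fujisawa-liu} then gives $M_W\sim_{\mathbb Q} c_1(\overline{F})$, where $\overline{F}$ is the Deligne canonical extension over $W$ of a distinguished Hodge sub-bundle of $\mathcal V_k$ — essentially the pushforward of the relative dualizing sheaf of the index-one cover — and where, under the unipotence assumption, the correction terms supported on $\Sigma$ that occur in general all vanish. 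By the semi-positivity theorem for admissible variations of mixed Hodge structure exploited in \cite{fujino-fujisawa-liu} (the mixed, compact-support counterpart of Fujita--Kawamata--Zucker--Koll\'ar), $\overline{F}$ is a nef vector bundle on $W$; in particular $\deg M_W\ge 0$ and $M_W$ is nef.

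The hard part is the remaining case $\deg M_W=0$: on the curve $W$ a nef $\mathbb Q$-divisor of positive degree is already ample, hence semi-ample, so everything comes down to showing that when $\deg\overline{F}=0$ the divisor $M_W$ is torsion. For this I would invoke the equality case of the above semi-positivity theorem, which forces the corresponding sub-variation — a polarized pure one, on the relevant graded piece of the weight filtration — to have constant period map; the monodromy then lands in the automorphism group of a fixed polarized Hodge structure, which is compact, and being contained in a discrete group it must be finite. A further finite \'etale base change thus trivializes the relevant local system and makes the moduli divisor trivial, so $M_{W'}$, and therefore $M_W$, is torsion and hence semi-ample. The delicate point throughout — and the reason the passage from the pure Hodge structures of the klt/lc case to the mixed, compact-support structures of Theorem \ref{d-thm2.3} is not formal — is to guarantee that the graded pieces of the weight filtration behave like genuinely polarized pure variations, so that the semi-positivity theorem and its equality case apply; establishing this is exactly the work carried out in \cite{fujino-fujisawa-liu}.
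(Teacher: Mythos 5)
The paper does not actually prove Theorem \ref{slc-curve}: it is imported verbatim as \cite[Corollary 1.4]{fujino-fujisawa-liu}, so there is no in-paper argument to compare against. Judged as a reconstruction of the cited proof, your outline follows the same overall strategy that reference (and the paper's own Lemma \ref{rid-lem}, which replays the key mechanism) uses: unipotent reduction by a finite base change with $M_{W'}=\pi^*M_W$ and descent of torsion via the norm/finite-kernel argument on $\mathrm{Pic}^0$, identification of the moduli part inside the canonical extension of the admissible variation of mixed Hodge structure of Theorem \ref{d-thm2.3}, nefness from the Fujino--Fujisawa semipositivity theorem, and a finiteness statement in the degree-zero case. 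In that sense the approach is essentially the intended one.

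The one step you should repair is the degree-zero case, which you yourself call the hard part. Invoking an ``equality case of the semi-positivity theorem'' that forces the relevant graded sub-variation to have \emph{constant period map} is neither an available black box nor what the argument actually needs; constancy of the whole period map is stronger than what is true in general. The mechanism used in \cite{fujino-fujisawa-liu} (and made explicit in Lemma \ref{rid-lem} and Remark \ref{rem4.5} of this paper) is: when the moduli part is numerically trivial, the rank-one summand $\mathcal O(M)\cdot(\sqrt[b]{\varphi})\subset F^0\Gr^W_l\!\left((\mathcal V^d)^*\right)$ is a genuine \emph{local subsystem} of rank one (this is \cite[Lemma 4.8]{fujino-fujisawa-liu}, and it is here, not in an equality case of semipositivity, that numerical triviality enters); then Deligne's Corollaire (4.2.8)(iii)(b) in \cite{deligne} gives that some tensor power is a constant system, and the canonical-extension comparison of \cite{fujino-fujisawa} turns this into $tM_C+t(\sqrt[b]{\varphi'})=0$, i.e.\ torsion, which descends to $W$. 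Note also that the flat object is the twist by $\sqrt[b]{\varphi}$, not $\mathcal O(M_W)$ itself, a point your ``$M_W\sim_{\mathbb Q}c_1(\overline F)$ for the pushforward of the relative dualizing sheaf of the index-one cover'' phrasing glosses over (the index-one-cover picture is the klt/lc one; in the slc case the correct object is the compact-support mixed Hodge structure, which is exactly why \cite{fujino-fujisawa-liu} is needed). With the degree-zero step rerouted through rank-one flatness plus Deligne's finiteness, your sketch matches the actual proof.
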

 
The same as \cite[Corollary 5.4]{fujino-liu}, 
we immediately get that

\begin{cor}\label{slc-cor}
Notation as above. If $\dim W=1$ and $D$ is nef, 
then $D$ is semi-ample. 
\end{cor}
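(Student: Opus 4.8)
The plan is to reduce the statement to an elementary degree computation on the curve $W$, using Theorem \ref{slc-curve} as the one nontrivial input. Since $W$ is a normal projective variety of dimension one, it is in fact a smooth projective curve, so a $\mathbb Q$-divisor on $W$ is nef if and only if it has nonnegative degree. First I would dispose of the case $\deg D>0$: then $D$ is ample, hence semi-ample, and there is nothing more to prove. Likewise, if $W\cong \mathbb P^1$ then $\operatorname{Pic}(W)\cong\mathbb Z$, so $\deg D=0$ forces $D\sim 0$, which is semi-ample. Thus the only remaining case is $\deg D=0$ together with $g(W)\geq 1$.

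In that case I would use the structure decomposition $D=K_W+B_W+M_W$. By Theorem \ref{slc-curve} the moduli $\mathbb Q$-divisor $M_W$ is semi-ample, in particular nef, so $\deg M_W\geq 0$. Since we assume the natural map $\mathcal O_W\to f_*\mathcal O_V(\lceil -(B^{<1}_V)\rceil)$ is an isomorphism, the discriminant $\mathbb Q$-divisor $B_W$ is a boundary $\mathbb Q$-divisor on $W$ (exactly as recalled just before Theorem \ref{slc-curve}, cf.\ \cite[Lemma 5.1]{fujino-liu}), so $B_W\geq 0$ and $\deg B_W\geq 0$; and $\deg K_W=2g(W)-2\geq 0$ because $g(W)\geq 1$. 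The three nonnegative numbers $\deg K_W$, $\deg B_W$, $\deg M_W$ sum to $\deg D=0$, so each of them vanishes. Hence $B_W=0$, while $\deg K_W=0$ forces $g(W)=1$ and $K_W\sim 0$, and $M_W$ is a semi-ample $\mathbb Q$-divisor of degree zero on a curve, which forces $M_W\sim_{\mathbb Q}0$ (a globally generated line bundle of degree zero on a curve is trivial). Therefore $D=K_W+B_W+M_W\sim_{\mathbb Q}0$, which is semi-ample, completing the proof.

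I do not expect a genuine obstacle here: the whole content of the corollary is the semi-ampleness of $M_W$ supplied by Theorem \ref{slc-curve}, and the rest is degree bookkeeping on a curve. The only point that deserves a word of care is the inequality $B_W\geq 0$, which is precisely the assertion that the isomorphism $\mathcal O_W\cong f_*\mathcal O_V(\lceil -(B^{<1}_V)\rceil)$ makes the discriminant divisor a boundary; this is the analogue of \cite[Lemma 5.1]{fujino-liu}, and with it in hand the argument is identical to \cite[Corollary 5.4]{fujino-liu}.
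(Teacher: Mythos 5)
Your proof is correct and follows essentially the same route as the paper, which simply invokes the argument of \cite[Corollary 5.4]{fujino-liu}: use Theorem \ref{slc-curve} for the semi-ampleness of $M_W$, positivity of $B_W$ and $K_W$ (after disposing of the ample and $\mathbb P^1$ cases), and degree bookkeeping to conclude $D\sim_{\mathbb Q}0$ when $\deg D=0$. The only cosmetic point is that on $\mathbb P^1$ one should write $D\sim_{\mathbb Q}0$ rather than $D\sim 0$, since $D$ is a $\mathbb Q$-divisor.
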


The following lemma seems to be a simple fact hidden behind the proof of 
Theorem \ref{slc-curve}. But it will play a very key role in
this paper. It shows that
if the moduli part
of an slc-trivial fibration is numerically trivial, then this moduli part
 defines a local system coming from 
the variation of (mixed) Hodge structures, 
and the difference between the moduli part and the local system
is given by the rational section $\varphi$.
This property makes the moduli parts possible to be glued together
in the non-normal cases.

\begin{lem}\label{rid-lem}
Notation as above. If $\dim W=1$ and $M_W \equiv 0$, 
then there exists a  positive integer $k$ 
such that $\mathcal O_W(k M_W)\cdot (\sqrt[b]{\varphi^k}) = \mathcal O_W$.
\end{lem}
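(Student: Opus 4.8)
The plan is to unwind the structure decomposition $D=K_W+B_W+M_W$ and use the Hodge-theoretic description of $M_W$ supplied by the theory of slc-trivial fibrations. Since $\dim W=1$, the variation of $\mathbb Q$-mixed Hodge structures attached to $f\colon(V,B_V)\to W$ (via Theorem \ref{d-thm2.3}, applied after a suitable birational model as in \cite{fujino-slc-trivial} and \cite{fujino-fujisawa-liu}) carries a canonical Hodge filtration, and the moduli $\mathbb Q$-divisor $M_W$ is, up to torsion, the divisor class associated to the lowest piece $F^{\mathrm{top}}$ of that filtration, i.e.\ to the line bundle $f_*\mathcal O_V(k(K_V+B_V)-kf^*K_W)$ for a suitable $k$ (here $k$ should be a multiple of $b$ making $k M_W$ Cartier). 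When $M_W\equiv 0$, this line bundle has degree zero, so the associated local subsystem of the variation of Hodge structures is a rank-one local system $\mathbb L$ on $W$ with $\mathcal O_W\otimes\mathbb L\simeq \mathcal O_W(kM_W)$ for $k$ divisible enough. The point is then that a degree-zero line bundle on the curve $W$ which underlies a polarizable weight-zero Hodge structure of rank one must be a \emph{torsion} element of $\mathrm{Pic}^0(W)$, because the monodromy of a polarizable variation is quasi-unipotent and, in rank one with trivial weights, the local system is actually of finite order; hence $\mathcal O_W(kM_W)$ is torsion, say $\mathcal O_W(kNM_W)\simeq\mathcal O_W$.

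From here the remaining work is bookkeeping with the rational section $\varphi$. Recall $K_V+B_V+\tfrac{1}{b}(\varphi)=f^*D$ and $D=K_W+B_W+M_W$, so on $W$ we have the $\mathbb Q$-linear equivalence $M_W\sim_{\mathbb Q}(\text{divisor supported on the discriminant locus})$ coming from $\varphi$; more precisely, pushing the principal b-divisor $\tfrac1b(\varphi)$ down to $W$ realizes $M_W$ as the $\mathbb Q$-divisor $D-K_W-B_W$, and the $b$-th root $\sqrt[b]{\varphi}$ is exactly the rational section trivializing the b-divisor attached to $M_W$ on the relevant model. Combining the torsion statement $\mathcal O_W(kNM_W)\cong\mathcal O_W$ with the identification of that isomorphism with multiplication by $\sqrt[b]{\varphi^{kN}}$ (using that $D$, $K_W$, $B_V$ are fixed so $M_W$, hence the section, is determined as in the discussion of \cite[(2.6.i)]{mori} and \cite[Remark 2.5]{ambro-shokurov}), one gets $\mathcal O_W(k'M_W)\cdot(\sqrt[b]{\varphi^{k'}})=\mathcal O_W$ with $k'=kN$. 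Replacing $k'$ by a further multiple if needed so that $b\mid k'$ and $k'M_W$ is Cartier gives the claimed $k$.

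The main obstacle I expect is making the identification ``$M_W$ numerically trivial $\Rightarrow$ the associated rank-one local system has finite monodromy, and the trivializing section is literally $\sqrt[b]{\varphi^k}$'' fully precise. This requires care about which birational model of $f$ one works on (the variation of Hodge structures in Theorem \ref{d-thm2.3} lives on a simple-normal-crossing model and an open subset $W^*$), about extending the local system and its trivializing section across the discriminant points $\Sigma=W\setminus W^*$ (where one must check the local monodromies are trivial once the Hodge-theoretic degree is zero — this is where quasi-unipotency and the weight-zero, rank-one hypotheses are used), and about tracking how $\varphi$ transforms under these birational modifications so that $\sqrt[b]{\varphi^k}$ on $W$ is the correct rational section. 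Everything else — the passage from ``torsion in $\mathrm{Pic}^0$'' to the displayed equality, and the choice of $k$ — is routine once this dictionary between the Hodge-theoretic picture and the b-divisor $\varphi$ is set up, and it is precisely the content that, as the statement says, is ``hidden behind the proof of Theorem \ref{slc-curve}''.
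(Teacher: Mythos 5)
Your overall strategy is the same as the paper's (read $M_W$ Hodge-theoretically, extract a rank-one piece, apply Deligne's finiteness for rank-one local subsystems of a polarizable variation, extend across the discriminant, and identify the trivialization with $\varphi$), but the decisive content of the lemma is exactly what you defer as ``bookkeeping'' and as the ``main obstacle,'' and it is not actually carried out. The conclusion is \emph{not} merely that $\mathcal O_W(kM_W)$ is torsion in $\operatorname{Pic}^0(W)$: the lemma asserts the divisor-level identity $kM_W+k(\sqrt[b]{\varphi})=0$, i.e.\ that the trivializing isomorphism is multiplication by the \emph{specific} rational section $\sqrt[b]{\varphi^{k}}$. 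This stronger statement is what makes the gluing in Corollary \ref{key-cor} work (on a cycle of rational curves, knowing $M_W|_{C_i}\sim_{\mathbb Q}0$ on each component does not glue, since $\operatorname{Pic}$ of the cycle has a $\mathbb G_m$-factor); without pinning down the section, your argument proves something strictly weaker than the lemma. In the paper this identification is obtained because the object shown to be a local subsystem is literally $\mathcal O(M)\cdot(\sqrt[b]{\varphi'})$ restricted over $W^*$ (via \cite[Theorem 1.3]{fujino-fujisawa-liu} and \cite[Lemma 4.8]{fujino-fujisawa-liu}, the latter being where $M_W\equiv 0$ is used to upgrade the Hodge-filtration summand to a flat rank-one subsystem), and the canonical extension results of Kawamata--Nakayama--Koll\'ar--Fujino--Fujisawa then give the equality of Cartier divisors, not just of classes.

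There is a second gap: you work directly on $W$ and assert that the local monodromies at the punctures become trivial ``once the Hodge-theoretic degree is zero.'' That is not true as stated; the monodromy of the rank-one piece is only quasi-unipotent (hence finite after a tensor power), and moreover $M_W$ itself need not be Cartier on $W$, so the canonical-extension formalism does not directly apply there. The paper handles both issues by first passing to a unipotent reduction $\pi\colon C\to W$, on which $M_C=\pi^*M_W$ is Cartier and the Hodge-theoretic description holds, taking a tensor power $t$ so that the rank-one subsystem becomes constant (Deligne, Corollaire (4.2.8)(iii) b)), extending canonically to get $tM_C+t(\sqrt[b]{\varphi'})=0$ on $C$, and only then descending to $W$ by pushing forward, which is why the final $k$ is $t\cdot\deg\pi$ rather than the $t$ your argument would produce. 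You would need to add both the covering/descent step and the precise identification of the trivializing section to turn your outline into a proof.
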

\begin{proof}
We can assume that the morphism 
$f\colon (V, B_V) \to W$ 
satisfies the following conditions (a)--(g).
They are nothing but the conditions 
stated in \cite[Proposition 6.3]{fujino-slc-trivial} and \cite[Section 5]{fujino-fujisawa-liu}: 
\begin{itemize}
\item[(a)] $W$ is a smooth curve and $V$ is a projective simple normal crossing variety. 
\item[(b)] $\Sigma_W$ and $\Sigma_V$ are simple normal crossing divisors on $W$ and $V$ respectively. 
\item[(c)] $f$ is a projective surjective morphism. 
\item[(d)]  $B_V$ and $B_W$,  $M_W$ are supported by $\Sigma_V$ and $\Sigma_W$ respectively. 
\item[(e)] every stratum of $(V, \Sigma^h_V)$ is smooth over $W^*:=W\setminus \Sigma_W$. 
\item[(f)] $f^{-1}(\Sigma_W)\subset \Sigma_V$, $f(\Sigma^v_V)\subset \Sigma_W$. 
\item[(g)] $(B^h_V)^{=1}$ is Cartier. 
\end{itemize}
By \cite[Lemma 7.3, Theorem 8.1]{fujino-slc-trivial},
there exists a finite surjective morphism $\pi\colon C \to W$ (unipotent reduction)
and a Cartier divisor $M_C$
such that $M_C=\pi^* M_W$.
Note that there is also an
induced (pre-basic) slc-trivial fibration (see \cite[4.3]{fujino-slc-trivial})
$f'\colon (V', B_{V'}) \to (C,\pi^*D)$
with 
$$
K_{V'}+B_{V'}+\frac{1}{b}(\varphi')=f'^*(\pi^*D).
$$ 
where $\varphi'$ is the pullback of $\varphi$.
By the proof of \cite[Theorem 1.3]{fujino-fujisawa-liu} 
(see also \cite[Lemma 5.2]{ambro-shokurov}),
$\mathcal O_C(M_C) \cdot (\sqrt[b]{\varphi'})|_{C^*}$ is a direct summand of 
$F^0\Gr^W_l\!\left((\mathcal V^d_{C^*})^*\right)$ 
where $C^*=\pi^{-1} (W^*)$ and $\Gr^W_l\!\left((\mathcal V^d_{C^*})^*\right)$ is 
a polarizable 
variation of $\mathbb Q$-Hodge structures. 
By definitions and Theorem \ref{d-thm2.3}, 
$$
\Gr^W_l\!\left((\mathcal V^d_{C^*})^*\right)= \mathcal O_{C^*}\otimes \mathbb V
$$
where $\mathbb V$ is a local system on $C^*$.
Note that the induced filtration $F^0( \mathbb V)$ is not necessary a local subsystem of $\mathbb V$.
But by \cite[Proposition 6.3]{fujino-slc-trivial} and the assumption that $M_C=\pi^* M_W\equiv 0$, 
there is an induced identification:
$$
\mathcal O_C(M_C) \cdot (\sqrt[b]{\varphi'})|_{C^*} =\mathcal O_{C^*}\otimes \mathbb M
$$
where $\mathbb M \subset \mathbb V$ 
is a local subsystem of rank one
by \cite[Lemma 4.8]{fujino-fujisawa-liu}.
Then by  \cite[Corollaire (4.2.8) (iii) b)]{deligne},
there is a positive integer $t$ such that  
$\mathbb M^{\otimes t}$ is a constant system
and 
$$
\mathcal O_C(tM_C) \cdot (\sqrt[b]{(\varphi')^t})|_{C^*}
=\mathcal O_{C^*}\otimes \mathbb M^{\otimes t}.
$$
Therefore, we can take a canonical extension such that 
$$
\mathcal O_C(tM_C) \cdot (\sqrt[b]{(\varphi')^t})=\mathcal O_{C}\otimes \mathbb M^{\otimes t}
$$
by \cite[Theorem 7.1]{fujino-fujisawa} (see also \cite[Lemma 1]{kawamata},
\cite[Theorem 1]{nakayama} or \cite[Theorem 2.6]{kollar}).
That is, $tM_C+t(\sqrt[b]{\varphi'})=0$ by viewing as Cartier divisors.
By pushing forward, we have that
$$
 t\cdot \deg \pi\cdot(M_W+(\sqrt[b]{\varphi}))=0.
$$
Let $k= t\cdot \deg \pi$. Then $\mathcal O_W(k M_W)\cdot (\sqrt[b]{\varphi^k}) = \mathcal O_W $.
\end{proof}

\begin{rem}\label{rem4.5}
Note that on $W$, we can show that $\mathcal O_{W}(M_W)(\sqrt[b]{\varphi})|_{W^*}$ 
defines a local subsystem
by the same proof of \cite[Theorem 1.3]{fujino-fujisawa-liu}. 
Then
by  \cite[Corollaire (4.2.8) (iii) b)]{deligne},
there is a positive integer $k$ such that  
$\mathcal O_{W}(kM_W)\cdot (\sqrt[b]{\varphi^k})|_{W^*}$ is a constant
system. This $k$ coincides with that $k$ in Lemma \ref{rid-lem}.
\end{rem}

Now we are ready to prove the following corollary.

\begin{cor}\label{key-cor} Notation as above.
If $\dim W=2$,  $(W, B_W)$ is dlt,
and $D$ is nef and big, then 
$D$ is semi-ample. 
\end{cor}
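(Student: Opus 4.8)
The plan is to reduce the semi-ampleness of $D$ to the null locus $N:=\bigcup\{C\mid C\text{ is a curve with }D\cdot C=0\}$ and then to analyse the slc-trivial fibration restricted over $N$. First note that we may assume $M_W=D-(K_W+B_W)$ is nef; this is automatic in our application, where $(W,B_W)$, $M_W$, $D$ are the output $(X^*,B^*)$, $M^*$, $K_{X^*}+B^*+M^*$ of the minimal model program of Lemma~\ref{mmp-lem}. Then for every $D$-trivial curve $C$ we have $(K_W+B_W)\cdot C=-M_W\cdot C\le 0$; contracting the $(K_W+B_W)$-negative $D$-trivial extremal curves by the cone and contraction theorems (the surface minimal model program terminates, $D$ stays nef and big, $(W,B_W)$ stays dlt), we reach a model on which $D$ is numerically trivial only along $(K_W+B_W)$-trivial, hence also $M_W$-trivial, curves. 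On this model a base-point-free theorem of Reid--Fukuda type for projective dlt surfaces, derived from \cite[Chapter~6]{fujino-foundations} exactly as log abundance for surfaces is (using that $D$ is nef and big and $D-(K_W+B_W)$ is nef), shows that $D$ is semi-ample once $D|_N$ is. As $D|_C\equiv 0$ on each component $C$ of $N$, it therefore suffices to show that $D|_N$ is a torsion class in $\mathrm{Pic}(N)_{\mathbb Q}$.

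Next I would restrict the fibration. Fix a component $C\subseteq N$ with normalization $\nu\colon C^\nu\to C$; base-changing $f\colon(V,B_V)\to W$ over $C^\nu$ and resolving yields a pre-basic slc-trivial fibration $f_C\colon(V_C,B_{V_C})\to C^\nu$ whose structure decomposition
\[
\nu^*(D|_C)=K_{C^\nu}+B_{C^\nu}+M_{C^\nu}
\]
is compatible with that of $f$, in particular with the global rational section $\varphi$ (write $\varphi_C$ for its pullback and keep the same $b$). Here $B_{C^\nu}\ge 0$, and $M_{C^\nu}$ is semi-ample by Theorem~\ref{slc-curve}. Since $\nu^*(D|_C)\equiv 0$, we get $\deg M_{C^\nu}+\deg(K_{C^\nu}+B_{C^\nu})=0$ with $\deg M_{C^\nu}\ge 0$ and $\deg B_{C^\nu}\ge 0$. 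If $\deg M_{C^\nu}>0$, then $M_{C^\nu}$ is ample, so $K_{C^\nu}+B_{C^\nu}$ is anti-ample and $C^\nu\cong\mathbb P^1$; then $\nu^*(D|_C)$, a degree-zero $\mathbb Q$-divisor on $\mathbb P^1$, is $\mathbb Q$-linearly trivial. If $\deg M_{C^\nu}=0$, then $M_{C^\nu}\equiv 0$ and Lemma~\ref{rid-lem} gives a $k_C>0$ with $k_CM_{C^\nu}+\tfrac{k_C}{b}(\varphi_C)=0$, so $M_{C^\nu}$ is torsion with trivialisation induced by $\varphi_C$; moreover $\deg(K_{C^\nu}+B_{C^\nu})=0$ with $B_{C^\nu}\ge 0$ forces either $C^\nu\cong\mathbb P^1$ (again $\nu^*(D|_C)\sim_{\mathbb Q}0$) or $C^\nu$ elliptic with $B_{C^\nu}=0$, in which case $K_{C^\nu}\sim 0$ and $\nu^*(D|_C)=M_{C^\nu}$ is torsion, trivialised through $\varphi_C$. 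In all cases $\nu^*(D|_C)$ is torsion, and whenever the moduli part is numerically trivial the trivialisation is, up to scalar, induced by the single global datum $\varphi$ on $V$ (together with $K_V+B_V$).

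It remains to glue these component-wise trivialisations along $N$. This is the phenomenon stressed in Remark~\ref{rem4.5} and before Lemma~\ref{rid-lem}, and it is why the slc- rather than lc-trivial fibration formalism is used: carrying out the structure decomposition of the canonical bundle formula directly over the non-normal curve $N$, the restrictions of $\varphi$ and of $K_V+B_V$ to the various $V_C$ are automatically compatible, so the trivialisations agree over the preimages of the nodes of $N$, while on $\mathbb P^1$-components the relevant line bundle is already trivial and the residual gluing scalars there are absorbed. Hence for a suitable common $k>0$ the sheaf $\mathcal O_N(kD)$ is trivial, so $D|_N$ is torsion, hence semi-ample, and therefore $D$ is semi-ample by the first paragraph.

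I expect the main obstacle to be this gluing step: one must verify that the Hodge-theoretic trivialisations produced by Lemma~\ref{rid-lem} on the separate components of $N$ are genuinely compatible once the base is allowed to be non-normal, i.e. run the canonical bundle formula over $N$ itself and not merely over the normalisations of its components. A secondary technical point is to make the first-paragraph reduction precise in the dlt (not plt/klt) setting --- where not every $D$-trivial curve is contractible, so one combines the contraction theorem for the $(K_W+B_W)$-negative part with vanishing along the remaining $(K_W+B_W)$-trivial curves --- together with the compatibility of $B_{C^\nu}$ with the connectedness results of Subsection~\ref{subsec2.2} needed to see that it is effective.
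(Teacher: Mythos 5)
Your second half is close in spirit to the paper's mechanism (case analysis on the degree of the moduli part, Lemma~\ref{rid-lem}, and gluing via the single global rational function $\varphi$), but your initial reduction is different from the paper's and it opens genuine gaps. The paper does not reduce to the null locus $N$ of $D$: since $2D-(K_W+B_W)=D+M_W$ is nef and big, the Kawamata--Shokurov basepoint-free theorem (\cite[Lemma 4.3]{fujino-liu}) reduces semi-ampleness of $D$ to semi-ampleness of $D|_{B^{=1}_W}$, i.e.\ everything is concentrated on the non-klt locus, and then \cite[Lemma 2.16]{gongyo} reduces further to showing that $D$ is $\mathbb{Q}$-linearly trivial on the $D$-trivial part of $B^{=1}_W$. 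Your reduction to $N$ is not delivered by a ``Reid--Fukuda type'' statement under the stated hypotheses ($D-(K_W+B_W)=M_W$ is only nef, and what the hypotheses actually give is base-locus control inside the non-klt locus, not the null locus); you would need something like Nakamaye plus Zariski--Fujita, which you neither state nor prove. More importantly, reducing to $N$ forces you to treat $D$-trivial curves that are \emph{not} components of $B^{=1}_W$, and for such a curve $C$ your key assertion --- that base change over $C^{\nu}$ gives an slc-trivial fibration with structure decomposition $\nu^*(D|_C)=K_{C^{\nu}}+B_{C^{\nu}}+M_{C^{\nu}}$, $B_{C^{\nu}}\geq 0$, compatible with $\varphi$ --- is unjustified. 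This compatibility is exactly what the paper has to work for: it needs divisorial adjunction $D|_{C_i}=K_{C_i}+(C-C_i)|_{C_i}+M_W|_{C_i}$ (available only for components of $B^{=1}_W$) together with the connectedness results (Lemma~\ref{conn-lem}, Corollary~\ref{norm-cor}) to verify the isomorphism $\mathcal{O}_{C_i}\to g_{i*}\mathcal{O}_{S_i}(\lceil -(B^{<1}_{S_i})\rceil)$ demanded in the definition of an slc-trivial fibration, and only then can it compare the two decompositions to get $M_{C_i}=M_W|_{C_i}$ and $B_{C_i}=(C-C_i)|_{C_i}$.

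The second gap is the gluing itself, which you acknowledge but do not close. On a cycle of rational curves, a line bundle that is trivial on every irreducible component need not be torsion: $\mathrm{Pic}^0$ of the cycle is $\mathbb{C}^*$, so the ``residual gluing scalars'' on $\mathbb{P}^1$-components are precisely the obstruction and cannot be ``absorbed''. The gluing only works when \emph{every} component of the configuration carries the canonical trivialization induced by the same global $\varphi$, which requires the moduli part to be numerically trivial on every component and the identification $M_{C_i}=M_W|_{C_i}$. The paper arranges exactly this: in the dangerous case $p_a(A)=1$ it first shows $B\cdot A=0$, hence (by connectedness) $B=0$ and $C=A=B^{=1}_W$, so $M_W\cdot C_i=0$ for all $i$ since $M_W$ is nef; the elliptic curve case is handled by Corollary~\ref{slc-cor}, the nodal rational curve is blown up into a cycle, and only then is the $\varphi$-gluing performed, component by component of the boundary. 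Your null-locus reduction can leave connected $D$-trivial configurations (e.g.\ cycles of rational curves of cusp type) with components outside $B^{=1}_W$ or mixed with components on which the induced moduli part would be ample; there your argument has neither a canonical trivialization nor any way to rule out a non-torsion gluing class. (Incidentally, a short intersection computation shows every $D$-trivial curve not contained in $B^{=1}_W$ is rational, which explains why the paper's route through the non-klt locus avoids all of this.) As written, therefore, the proposal has two genuine gaps: the unproved reduction to $N$, and the unproved compatibility/gluing off the boundary.
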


\begin{proof}
Let $C=B^{=1}_W$ and assume that $C=\sum C_i$ is connected for simplicity.
Note that $D=K_W+B_W+M_W$ where $B_W=C+B^{<1}_W$ is a boundary $\mathbb Q$-divisor and
$M_W$ is nef by \cite[Theorem 1.2]{fujino-slc-trivial}.
Then $2D-(K_W+B_W)=D+M_W$ is nef 
and big. Therefore, to prove that $D$ is semi-ample,
it suffices to prove that $D|_C$ is semi-ample
by Kawamata--Shokurov basepoint-free theorem (cf. \cite[Lemma 4.3]{fujino-liu}).
Let $C=A+B$ where $A=\sum\limits_{D\cdot C_i =0}C_i$ and $B=\sum\limits_{D\cdot C_j >0}C_j$.
Then $D|_A$ is numerically trivial on $A$ and $D|_B$ is ample on $B$. 
If we can prove that $D|_A$ is $\mathbb Q$-linearly trivial, then 
$D|_C$ is semi-ample by \cite[Lemma 2.16]{gongyo}. Note that
$D|_A$ is numerically trivial is equivalent to
$$
(K_W+A+B+B^{<1}_W+M_W)\cdot A=0.
$$
This implies that $2 p_a(A)-2=\deg K_A =(K_W+A)\cdot A\leq 0$. If $p_a(A)=0$,
then it is obvious that $D|_A$ is $\mathbb Q$-linearly trivial.
Thus we assume that $p_a(A)=1$. It follows that
$$
 (K_W+A)\cdot A=B\cdot A=B^{<1}_W\cdot A= M_W\cdot A=0.
$$
We can see that $A$ does not intersect $B$ by $ B\cdot A=0$.
Since we assume that $C$ is connected at the beginning,
It follows that $B=0$ in this case. That is, $C=A$. 
Moreover, $(K_W+C)\cdot C=(K_W+A)\cdot A=0$ implies that
 $C$ is either a smooth elliptic curve or a nodal rational curve or 
a cycle of smooth rational curves (taking analytic dlt pairs into consideration).
When $C$ is a smooth elliptic curve, 
$D|_C$ is $\mathbb Q$-linearly trivial by Corollary \ref{slc-cor}.
When $C$ is a nodal rational curve and $P$ is the nodal point,
we blow up $W$ at point $P$ and denote it as $\pi\colon W'\to W$. 
Note that $W'$ is smooth at around $\pi^{-1}(P)$ since $W$ is smooth at the nodal point $P$.
Let $B_{W'}$ be the $\mathbb Q$-divisor 
such that $K_{W'}+B_{W'}=\pi^*(K_W+B_W)$,  $D'=\pi^*D$ and $M_{W'}=\pi^*M_W$.
Then it is easy to see that $C'=B^{=1}_{W'}$ is a cycle of two smooth rational curves
and $D'$ is semi-ample if and only if $D$ is semi-ample. Thus we reduce the case to
that $C$ is a cycle of smooth rational curves.
Then $M_W\cdot A=M_W\cdot C=0$ implies that $M_W\cdot C_i=0$ for every $i$ since $M_W$ is nef.
Let $S$ be the union of strata of $B^{=1}_V$ mapping into $C$.
By further resolutions, 
we can assume that $S$ is a union of irreducible components of $B^{=1}_V$ 
(cf. \cite[Proposition 6.3.1]{fujino-foundations}).
By Lemma \ref{conn-lem}, the natural map 
$\mathcal O_{C}\to g_{*}\mathcal O_{S}(\lceil -(B^{<1}_{S})\rceil)$
is an isomorphism, 
where $K_{S}+B_{S}=(K_V+B_V)|_{S}$ and 
$g=f|_{S}$.
Similarly,
let $S_i$ be the union of irreducible components of $B^{=1}_V$ 
dominant onto (not only mapping into) $C_i$ for every $i$. 
By Lemma \ref{conn-lem} and Corollary \ref{norm-cor},
$\mathcal O_{C_i}\to {g_{i}}_*\mathcal O_{S_i}(\lceil -(B^{<1}_{S_i})\rceil)$
is an isomorphism where 
$K_{S_i}+B_{S_i}=(K_V+B_V)|_{S_i}$ and 
$g_i=f|_{S_i}$.
Then by adjunction,
$$
K_S+B_S+\frac{1}{b}(\varphi)|_S=(K_V+B_V+\frac{1}{b}(\varphi))|_S=g^*(D|_C)
$$ 
and
$$
K_{S_i}+B_{S_i}+\frac{1}{b}(\varphi)|_{S_i}=(K_V+B_V+\frac{1}{b}(\varphi))|_{S_i}=g_i^*(D|_{C_i}).
$$
Note that the number $b_i:=\min \{m\in \mathbb Z_{>0}| m(K_{F_i}+B_{F_i})\sim 0\}$
is a factor of $b$ where $F_i$ is the general fiber of $g_i$ for every $i$. 
That is, there exists a positive integer $s_i$ such that $b=s_i b_i$ for every $i$.
Then the morphism $g_i\colon (S_i, B_{S_i}) \to C_i$ 
satisfies our definition of slc-trivial fibrations with 
$K_{S_i}+B_{S_i}+\frac{1}{b_i}(\sqrt[s_i]{\varphi})|_{S_i}=g_i^*(D|_{C_i})$ and
\begin{equation}\label{eq4.1}
D|_{C_i}=K_{C_i}+B_{C_i}+M_{C_i}.
\end{equation}
By Lemma \ref{rid-lem},
there exists a positive integer $k$ (not depending on $i$)
such that  
\begin{equation*}
\mathcal O_{C_i}(kM_{C_i})\cdot  (\sqrt[b]{\varphi^k}|_{C_i}) =\mathcal O_{C_i}.
\end{equation*}
By adjunction, we have
\begin{equation}\label{eq4.2}
D|_{C_i}=(K_{W}+C+M_{W})|_{C_i}=K_{C_i}+(C-C_i)|_{C_i}+M_{W}|_{C_i}.
\end{equation}
Comparing \eqref{eq4.1} and \eqref{eq4.2}, it is easy to  get 
that $B_{C_i}=(C-C_i)|_{C_i}$ consists of two reduced points on $C_i$ 
and $M_{C_i}=M_{W}|_{C_i}$. Therefore,
\begin{equation}\label{eq4.3}
(\mathcal O_{C}(kM_W)\cdot  (\sqrt[b]{\varphi^k}|_C))|_{C_i}=
\mathcal O_{C_i}(kM_{C_i})\cdot (\sqrt[b]{\varphi^k}|_{C_i}) =\mathcal O_{C_i}.
\end{equation}
Since the right hand side is the structure sheaf for every $i$,
we can glue them together and get $\mathcal O_C$ exactly.
That is,
$\mathcal O_{C}(kM_W)\cdot  (\sqrt[b]{\varphi^k}|_C)=\mathcal O_C$.
Then $\mathcal O_{C}(M_W)\sim_{\mathbb Q} \mathcal O_C$ and thus
 $M_W|_C \sim_{\mathbb Q}0$. Therefore,
$$
D|_C=K_C+M_W|_C\sim M_W|_C\sim_{\mathbb Q} 0,
$$
and this is what we want.
\end{proof}

\begin{rem}\label{use-rem}
In fact, we showed that if $\dim W=2$,  $(W, B_W)$ is dlt, $D$ is nef
and there is some number $a>0$ such that $aD-(K_W+B_W)$ is nef and big, then 
$D$ is semi-ample. The proof is without any change.
\end{rem}

\begin{proof}[proof of the main theorem]
By the same proof of \cite[Theorem 1.2]{fujino-liu},
we can reduce to prove that the ring $R(Y, D)$ is finitely generated
for an slc-trivial fibration $f\colon (V, B_V) \to (Y,D)$ where 
$D=K_Y+B_Y+M_Y$, $(Y, B_Y)$ is dlt and $D$ is big.
By Lemma \ref{mmp-lem}, we can further assume that $D$ is nef.
Then our conclusion follows from
Corollary \ref{key-cor}.
\end{proof}

%%%%%%%%%%%%%%%


\begin{thebibliography}{16} 

\bibitem{ambro-shokurov}
F.~Ambro, 
{\em{Shokurov's boundary property}}, 
J. Differential Geom. \textbf{67} (2004), no. 2, 229--255. 

\bibitem{bchm}
C.~Birkar, P.~Cascini, C.~D.~Hacon, J.~M\textsuperscript{c}Kernan, 
{\em{Existence of minimal models for varieties of log general type}}, 
J. Amer. Math. Soc. \textbf{23} (2010), no. 2, 405--468. 

\bibitem{deligne} 
P.~Deligne, 
{\em{Th\'eorie de Hodge, II}}, 
Inst. Hautes \'Etudes Sci. Publ. Math. \textbf{40} (1971), 5--57.

\bibitem{floris} 
E.~Floris, 
{\em{Inductive approach to effective b-semiampleness}}, 
Int. Math. Res. Not. IMRN 2014, no. 6, 1465--1492.

\bibitem{fujino-bundle}
O.~Fujino,
{\em{A canonical bundle formula for certain algebraic fiber spaces and its applications}},
Nagoya Math. J.  \textbf{172} (2003), 129--171.

\bibitem{fujino-fundamental} 
O.~Fujino, {\em{Fundamental theorems for the log minimal model program}}, 
Publ. Res. Inst. Math. Sci. \textbf{47} (2011), no. 3, 727--789.


\bibitem{fujino-foundations} 
O.~Fujino, 
{\em{Foundations of the minimal model program}}, 
MSJ Memoirs, \textbf{35}. Mathematical Society of Japan, 
Tokyo, 2017.

\bibitem{fujino-slc-trivial} 
O.~Fujino, 
{\em{Fundamental properties of basic slc-trivial fibrations}}, 
to appear in Publ. Res. Inst. Math. Sci.

\bibitem{fujino-fujisawa}
O.~Fujino, T.~Fujisawa, 
{\em{Variations of mixed Hodge structure and semipositivity theorems}}, 
Publ. Res. Inst. Math. Sci. \textbf{50} (2014), no. 4, 589--661. 

\bibitem{fujino-fujisawa-liu} 
O.~Fujino, T.~Fujisawa, H.~Liu, 
{\em{Fundamental properties of basic slc-trivial fibrations, II}}, 
to appear in Publ. Res. Inst. Math. Sci.

\bibitem{fujino-gongyo} 
O.~Fujino, Y.~Gongyo, 
{\em{On the moduli b-divisors of lc-trivial fibrations}}, 
Ann. Inst. Fourier (Grenoble) \textbf{64} (2014), no. 4, 1721--1735.

\bibitem{fujino-liu-normal} 
O.~Fujino, H.~Liu, 
{\em{On normalization of quasi-log canonical pairs}}, 
Proc. Japan Acad. Ser. A Math. Sci. \textbf{94} (2018), no. 10, 97--101.

\bibitem{fujino-liu} 
O.~Fujino, H.~Liu, 
{\em{On the log canonical ring of projective plt pairs with the Kodaira dimension two}},
to appear in Ann. Inst. Fourier (Grenoble).

\bibitem{fujino-mori} 
O.~Fujino, S.~Mori, 
{\em{A canonical bundle formula}}, 
J. Differential Geom. \textbf{56} (2000), no. 1, 167--188.

\bibitem{gongyo} 
Y.~Gongyo, 
{\em{On weak Fano varieties with log canonical singularities}}, 
J. Reine Angew. Math. \textbf{665} (2012), 237--252.


\bibitem{kawamata} 
Y.~Kawamata,
{\em{Kodaira dimension of algebraic fibre spaces over curves}},
Inv. Math. \textbf{66} (1982), 57--71.

\bibitem{kollar}
J.~Koll\'ar, 
{\em{Higher direct images of dualizing sheaves, II}}. 
Ann. of Math. \textbf{124} (1986), 171--202.


\bibitem{mori}
S.~Mori, 
{\em{Classification of higher-dimensional varieties}}, 
Proc. Symp. Pure Math. \textbf{46} (1987), 269--331.


\bibitem{nakayama} 
N.~Nakayama, 
{\em{Hodge filtrations and the higher direct images of canonical sheaves}}, 
Inv. Math. \textbf{85} (1986), no. 1, 217--221.

\bibitem{ps}
C.~A.~M. Peters, J.~H.~M. Steenbrink, 
{\em{Mixed Hodge structures}}, 
Ergebnisse der Mathematik und ihrer Grenzgebiete. 3. Folge. 
A Series of Modern Surveys in Mathematics 
[Results in Mathematics and Related Areas. 3rd Series. A Series of Modern Surveys in Mathematics],
\textbf{52}. Springer-Verlag, Berlin, 2008.



\end{thebibliography}
\end{document}